\newtheorem{theorem}{Theorem}
\newtheorem{definition}{Definition}
\newtheorem{lemma}{Lemma}
\newtheorem{remark}{Remark}
\newtheorem{assumption}{Assumption}
\newcommand{\hL}{\mathcal{L}}
\newcommand{\hN}{\mathcal{N}}
\newcommand{\cL}{{\cal{L}}}
\newcommand{\cN}{{\cal N}}
\newcommand{\cT}{{\cal T}}
\newcommand{\cE}{{\cal E}}
\newcommand{\cK}{{\cal K}}
\newcommand{\cU}{{\cal U}}
\newcommand{\cY}{{\cal Y}}
\begin{document}
\title{Hierarchical Distributed Voltage Regulation in Networked Autonomous Grids}
\author{Xinyang Zhou*, Zhiyuan Liu*, Wenbo Wang, Changhong Zhao, Fei Ding, Lijun Chen
\thanks{*The first two authors contributed equally.}
\thanks{X. Zhou, C. Zhao, and F. Ding are with the National Renewable Energy Laboratory, Golden, CO, 80401, USA (Emails: \{xinyang.zhou, changhong.zhao, fei.ding\}@nrel.gov).}
\thanks{Z. Liu and L. Chen are with the College of Engineering and Applied Science, University of Colorado, Boulder, CO 80309, USA (Emails: \{zhiyuan.liu, lijun.chen\}@colorado.edu).}
\thanks{W. Wang is with the Department of Electrical and Computer Engineering, New York University, Brooklyn, NY 11201, USA (Email: ww787@nyu.edu)}
}

\maketitle

\begin{abstract}
We propose a novel algorithm to solve optimal power flow (OPF) that aims at dispatching controllable distributed energy resources (DERs) for voltage regulation at minimum cost. The proposed algorithm features unprecedented scalability to large distribution networks by utilizing an information structure based on networked autonomous grids (AGs). Specifically, each AG is a subtree of a large distribution network that has a tree topology. The topology and line parameters of each AG are known only to a regional coordinator (RC) that is responsible for communicating with and dispatching the DERs within this AG. The reduced network, where each AG is treated as a node, is managed by a central coordinator (CC), which knows the topology and line parameters of the reduced network only and communicates with all the RCs. We jointly explore this information structure and the structure of the linearized distribution power flow (LinDistFlow) model to derive a hierarchical, distributed implementation of the primal-dual gradient algorithm that solves the OPF. The proposed implementation significantly reduces the computation burden compared to the centrally coordinated implementation of the primal-dual algorithm.  Numerical results on a 4,521-node test feeder show that the proposed hierarchical distributed algorithm can achieve an improvement of more than tenfold in the speed of convergence compared to the centrally coordinated primal-dual algorithm. 
\end{abstract}
\section{Introduction}

The increasing penetration of distributed energy resources (DERs)---such as rooftop photovoltaic, electric vehicles, battery energy storage systems, thermostatically controlled loads, and other controllable loads---has not only provided enormous potential control flexibility that we can explore, but also imposed challenging tasks of optimally coordinating a large number of networked endpoints to satisfy system-wide objectives and constraints such as demand response and voltage regulation. Distributed algorithms are developed to facilitate the scalable control of large networks of dispatchable DERs by distributing the computational burden either coordinated by a (logically) central controller, e.g.,  \cite{ bolognani2013distributed,dall2018optimala,zhou2017incentive, zhou2017discrete}, or among neighboring agents without a central controller, e.g., \cite{peng2018distributed,magnusson2017voltage, vsulc2014optimal,bazrafshan2017decentralized, wu2018smart}. In both cases however, the computational loads increase as the system gets larger, making it more difficult to realize fast real-time control.

One way to cope with the computational burden of large systems is to consider the system as a network of autonomous grids (AGs).
AGs rely on cellular building blocks that can self-optimize when isolated from neighboring grids and can participate in optimal operations when interconnected to a larger grid \cite{kroposki2018autonomous}. 
Such structure enables us to divide the potentially immense computation to smaller problems for AGs.


Extensive studies have been done on optimization and  control for islanded AGs, e.g., on frequency and voltage regulation within microgrids \cite{dorfler2016breaking,simpson2017voltage} , and  on technologies for AGs to follow dispatch power set points from the bulk system operator while respecting operational constraints inside AGs \cite{dall2018optimal,singhal2017framework}. 
On the other hand, interactions among AGs in grid-connected mode are crucial for the optimality and stability of a larger network of AGs. References \cite{wang2015coordinated,wang2016networked,fathi2013adaptive} consider centralized optimal control to balance energy among AGs by simplifying each AG as a node, but they do not model power flow  within individual AGs.
References \cite{utkarsh2018distributed,shi2015distributed,wang2016incentivizing, gregoratti2015distributed, zhang2014randomized, wang2014game} design distributed algorithms to balance power transmission among AGs while respecting operational constraints within AGs without considering power flow dynamics among AGs.
Studies have also designed incentive-based  distributed algorithms to optimize energy transactions among AGs without modeling the power flow among AGs  \cite{wang2016incentivizing, gregoratti2015distributed, zhang2014randomized, wang2014game}.

It is crucial to model the power flow both within and among AGs to satisfy network-wide objectives and operational constraints, e.g., voltage regulation; however, very limited literature has done this. Recently, \cite{zhang2018dynamic} applies a game-theoretic approach to manage a partitioned distribution network based on noncooperative Nash game, but the uniqueness of the equilibrium, the convergence, and the global performance are all difficult to characterize.

This work considers a potentially large distribution network controlled cooperatively by a number of networked AGs. We model the distribution network using the DistFlow model \cite{baran1989optimala, baran1989optimalb}, which captures power flows within and among AGs. A regional coordinator (RC) communicates with all the dispatchable nodes within each AG, and a central coordinator (CC) communicates with all the RCs. Each RC knows only the topology and line parameters of the AG that it controls, and the CC knows only the topology and line parameters  of the reduced network, which treats each AG as a node and connects all the AGs. Given such information availability, we explore the topological structure of the linearized DistFlow (LinDistFlow) model to derive a hierarchical, distributed implementation of the primal-dual gradient algorithm that solves an optimal power flow (OPF) problem.  
The OPF problem minimizes the total cost of all the controllable DERs and a cost associated with the total network load subject to voltage regulation constraints. The proposed implementation significantly reduces the computation burden compared to the centrally coordinated  implementation of the primal-dual algorithm, which requires a central coordinator for the whole network. The performance of the proposed implementation is verified through numerical simulation of a 4,521-node test feeder. Simulation results show that an improvement of more than tenfold in the speed of convergence can be achieved by the hierarchical distributed method compared to the centrally coordinated implementation. This significant improvement in convergence speed makes real-time grid optimization and control possible. Meanwhile, to our best knowledge, the size of the network in our simulation is the largest in optimization-based control in power system.


The rest of this paper is organized as follows. Section~\ref{sec:model} models the distribution system, formulates the OPF problem, and introduces the primal-dual gradient algorithm for solving the OPF problem. Section~\ref{sec:hier} proposes a hierarchical distributed implementation of the primal-dual gradient algorithm.
Section~\ref{sec:numerical} provides numerical results, and Section~\ref{sec:conclusion} concludes this paper.

\section{System Modeling and Problem Formulation}\label{sec:model}
\subsection{Power Flow in Distribution Network}
Consider a radial power distribution network denoted by $\cT=\{\cN \cup \{0\},\cE\}$ with $N+1$ nodes collected in the set $\cN \cup \{0\}$, where $\cN:=\{1, ..., N\}$ and node $0$ is the slack bus, and distribution lines collected in the set $\cE$. For each node $i\in \hN$, denote by $\cE_i \subseteq \cE$ the set of lines on the unique path from node $0$ to node $i$, and let $p_i$ and $q_i$ denote the real and reactive power injected, where negative power injection means power consumption and positive power injection means power generation. Let $v_i$ be the magnitude of the complex voltage (phasor) at node $i$. For each line $(i, j)\in \cE$, denote by $r_{ij}$ and $x_{ij}$ its resistance and reactance, and  $P_{ij}$ and $Q_{ij}$ the real and reactive power from node $i$ to node $j$. Let $\ell_{ij}$ denote the squared magnitude of the complex branch current (phasor) from node $i$ to $j$. 

We adopt the following DistFlow model \cite{baran1989optimala, baran1989optimalb} for the radial distribution network:
\begin{subequations}\label{eq:bfm}
	\begin{eqnarray}
		\hspace{-5mm} P_{ij} \hspace{-2mm} &=&\hspace{-2mm} - p_j +\hspace{-2mm}\sum_{k: (j,k)\in \cE} \hspace{-2mm} P_{jk}+  r_{ij}  \ell_{ij}  \label{p_balance}, \\
		\hspace{-5mm}Q_{ij} \hspace{-2mm}&=&\hspace{-2mm}  -q_j + \hspace{-2mm}\sum_{k: (j,k)\in \cE} \hspace{-2mm} Q_{jk} + x_{ij} \ell_{ij} \label{q_balance},\\
		\hspace{-5mm}v_j^2 \hspace{-2mm}&=&\hspace{-2mm}  v_i^2 - 2 \big(r_{ij} P_{ij} + x_{ij} Q_{ij} \big) + \big(r_{ij}^2+x_{ij}^2\big) \ell_{ij} \label{v_drop},\\[1pt]
		\hspace{-5mm}\ell_{ij}v_i^2 \hspace{-2mm}&=&\hspace{-2mm}   P_{ij}^2 + Q_{ij}^2  \label{currents}.
	\end{eqnarray}
\end{subequations}

Following \cite{baran1989network, zhou2018reverse,liu2018signal}, we assume that the active and reactive power loss $r_{ij} \ell_{ij}$ and $ x_{ij} \ell_{ij}$, as well as $r^2_{ij} \ell_{ij}$ and $x^2_{ij} \ell_{ij}$, are negligible and can thus be ignored.   
Indeed, the losses are much smaller than power flows $P_{ij}$ and $Q_{ij}$, typically on the order of $1\%$. We further assume that $v_i \approx 1,\ \forall i$ so that we can approximate $v_j^2 - v_i^2 \approx 2 (v_j - v_i)$ in Eq.~\eqref{v_drop}.\footnote{This assumption is not essential because we could work with $v_i^2$ instead.} This approximation introduces a small relative error of at most $0.25\%$ under the practically maximum $5\%$ deviation in voltage magnitude.

With these approximations, Eqs.~\eqref{eq:bfm} is simplified to the following linear model:
\begin{eqnarray}
\bm{v}&=&R\bm{p}+X\bm{q}+\tilde{\bm{v}},\label{eq:lindistflow}
\end{eqnarray}
where bold symbols $\bm{v}=[v_1,\ldots,v_N]^{\top}$, $\bm{p}=[p_1,\ldots,p_N]^{\top}$, $\bm{q}=[q_1,\ldots,q_N]^{\top}\in\mathbb{R}^N$ represent vectors, $\tilde{\bm{v}}$ is a constant vector depending on initial conditions, and the sensitivity matrices $R$ and $X$, respectively, consist of elements:
\begin{eqnarray}
	R_{ij}:= \!\!\! \sum_{(h,k)\in \cE_i \cap \cE_j}\!\!\!\! r_{hk}, \ \ \ \ X_{ij}:=\!\!\!\! \sum_{(h,k)\in \cE_i \cap \cE_j}\!\!\!\! x_{hk}  \label{X_def}.
\end{eqnarray}
Here, the voltage-to-power-injection sensitivity factors $R_{ij}$ ($X_{ij}$) represent \textit{the resistance (reactance) of the common path of node $\bm{i}$ and $\bm{j}$ leading back to node 0}. Keep in mind that this result serves as the basis for designing the hierarchical distributed algorithm to be introduced later. Fig.~\ref{fig:RX} illustrates $\cE_i \cap \cE_j$ for two arbitrary nodes $i$ and $j$ in a radial network and their corresponding $R_{ij}$ and $X_{ij}$.
\begin{figure}[htbp]
	\centering
	\includegraphics[width=.3\textwidth]{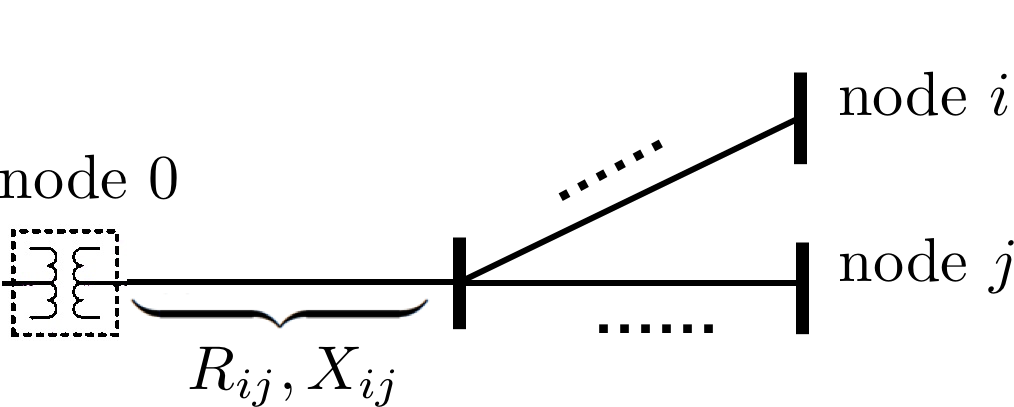}
	\caption{$\cE_i \cap \cE_j$ for  two arbitrary nodes $i, j$ in the network and the corresponding mutual voltage-to-power-injection sensitivity factors $R_{ij}, X_{ij}$. }
	\label{fig:RX}
\end{figure}

\subsection{OPF Problem}
Assume at node $i\in\cN$ there is a dispatchable DER (or aggregation of DERs) whose real and reactive power injections are confined as $(p_i,q_i)\in\cY_i$, where $\cY_i$ is a convex and compact set. Let $P_0$ denote active power injected  into the feeder at node 0, which can be approximated by the total active power loads as:
\begin{eqnarray}
 P_0=-P_I-\sum_{i\in\cN}p_i,\label{eq:P0}
\end{eqnarray}
where $P_I$ denotes the uncontrollable total inelastic loads. Note that $P_0$ is negative if the total active power consumption is larger than the total active power generation.
Use $\bm{v}(\bm{p},\bm{q})$ and $P_0(\bm{p})$ to represent \eqref{eq:lindistflow} and \eqref{eq:P0}, respectively, and consider the following OPF problem:
\begin{subequations}\label{eq:opt}
\begin{eqnarray}
&\underset{\bm{p},\bm{q}}{\min} & \sum_{i\in\cN}C_i(p_i,q_i)+C_0(P_0(\bm{p})),\\
& \text{s.t.}&
\underline{\bm{v}}\leq \bm{v}(\bm{p},\bm{q}) \leq \overline{\bm{v}}, \label{eq:voltreg}\\
&     & (p_i,q_i)\in\cY_i,\forall i\in\cN,\label{eq:X}
\end{eqnarray}
\end{subequations}
where the objective $C_i(p_i,q_i)$ is the cost function for node $i$, and the coupling term $C_0(P_0(\bm{p}))$ represents the cost associated with the total network load. For example, $C_0(P_0(\bm{p}))=\alpha (P_0(\bm{p})-\hat{P}_0)^2$ penalizes $P_0(\bm{p})$'s deviation from a dispatching signal $\hat{P}_0$ with a weight $\alpha>0$. We make the following assumption for these cost functions.
\begin{assumption}\label{ass:costfun}
$C_i(p_i,q_i),\: \forall i\in\cN$ are continuously differentiable and strongly convex in $(p_i,q_i)$, with bounded first-order derivative in $\cY_i$. Meanwhile, $C_0(P_0)$ is continuously differentiable and convex with bounded first-order derivative.
\end{assumption}

\subsection{Primal-Dual Gradient Algorithm}
Associate dual variables $\underline{\bm{\mu}}$ and $\overline{\bm{\mu}}$ with the left-hand-side and the right-hand-side of \eqref{eq:voltreg}, respectively, so that the Lagrangian of \eqref{eq:opt} is written as:
\begin{eqnarray}
    \cL(\bm{p},\bm{q};\overline{\bm{\mu}},\underline{\bm{\mu}})=\sum_{i\in\cN}C_i(p_i,q_i)+C_0(P_0(\bm{p}))\nonumber\\
    +\underline{\bm{\mu}}^{\top}(\underline{\bm{v}}-\bm{v}(\bm{p},\bm{q}))+\overline{\bm{\mu}}^{\top}(\bm{v}(\bm{p},\bm{q})-\overline{\bm{v}}),\label{eq:lang}
\end{eqnarray}
with 
\eqref{eq:X} treated as the domain of $(\bm{p},\bm{q})$.

To design an algorithm with provable convergence, we introduce the following regularized Lagrangian with parameter $\phi>0$:
\begin{eqnarray}
    \cL_{\phi}(\bm{p},\bm{q};\overline{\bm{\mu}},\underline{\bm{\mu}})=\sum_{i\in\cN}C_i(p_i,q_i)+C_0(P_0(\bm{p}))\nonumber\\[-5pt]
    +\underline{\bm{\mu}}^{\top}(\underline{\bm{v}}-\bm{v}(\bm{p},\bm{q}))+\overline{\bm{\mu}}^{\top}(\bm{v}(\bm{p},\bm{q})-\overline{\bm{v}})-\frac{\phi}{2}\|\bm{\mu}\|^2_2,\label{eq:langr}
\end{eqnarray}
where $\bm{\mu} :=[\underline{\bm{\mu}}^{\top}, \overline{\bm{\mu}}^{\top}]^{\top}$.
\begin{theorem}\label{the:unique}
There exists one unique saddle point $(\bm{p}^*,\bm{q}^*;\underline{\bm{\mu}}^*,\overline{\bm{\mu}}^*)$ of $\cL_{\phi}$.
\end{theorem}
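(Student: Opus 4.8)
The plan is to exploit the convex–concave structure that the regularization $-\frac{\phi}{2}\|\bm{\mu}\|_2^2$ imposes on $\cL_\phi$. First I would record the key structural facts. Since $\bm{v}(\bm{p},\bm{q})=R\bm{p}+X\bm{q}+\tilde{\bm{v}}$ and $P_0(\bm{p})=-P_I-\sum_{i\in\cN}p_i$ are both affine in $(\bm{p},\bm{q})$, the coupling terms $\underline{\bm{\mu}}^\top(\underline{\bm{v}}-\bm{v})+\overline{\bm{\mu}}^\top(\bm{v}-\overline{\bm{v}})$ are affine in $(\bm{p},\bm{q})$ for each fixed $\bm{\mu}$ and linear in $\bm{\mu}$ for each fixed $(\bm{p},\bm{q})$. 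Combined with Assumption~\ref{ass:costfun}, this shows that for every fixed $\bm{\mu}\ge 0$ the map $(\bm{p},\bm{q})\mapsto\cL_\phi$ is strongly convex on the compact convex domain $\cY:=\prod_{i\in\cN}\cY_i$ (the strongly convex $C_i$, plus the convex composition $C_0\circ P_0$, plus an affine term), while for every fixed $(\bm{p},\bm{q})$ the map $\bm{\mu}\mapsto\cL_\phi$ is strongly concave on the nonnegative orthant (a linear term minus $\frac{\phi}{2}\|\bm{\mu}\|_2^2$), where recall $\bm{\mu}\ge 0$ as a multiplier for inequality constraints.

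Existence is the part that needs care, because the dual feasible set $\{\bm{\mu}\ge 0\}$ is unbounded, so compactness of $\cY$ alone does not yield a saddle point. Here the regularization does the work: for fixed $(\bm{p},\bm{q})\in\cY$, maximizing the separable concave function $\bm{\mu}^\top\bm{g}(\bm{p},\bm{q})-\frac{\phi}{2}\|\bm{\mu}\|_2^2$ over $\bm{\mu}\ge 0$, where $\bm{g}:=[(\underline{\bm{v}}-\bm{v})^\top,(\bm{v}-\overline{\bm{v}})^\top]^\top$ collects the two constraint residuals, has the closed-form componentwise maximizer $\bm{\mu}=\max\{\bm{0},\bm{g}/\phi\}$. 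Since $\bm{g}$ is affine and $\cY$ is compact, $\|\bm{g}(\bm{p},\bm{q})\|_2$ is bounded by some $G$ on $\cY$, so every relevant maximizer satisfies $\|\bm{\mu}\|_2\le G/\phi=:M$. I would therefore replace the unbounded dual domain by the compact convex set $\cM:=\{\bm{\mu}\ge 0:\|\bm{\mu}\|_2\le M\}$ without changing the problem, and then invoke a classical minimax/saddle-point existence theorem (e.g., Sion's) for a continuous function that is convex in $(\bm{p},\bm{q})$ over the compact convex $\cY$ and concave in $\bm{\mu}$ over the compact convex $\cM$.

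For uniqueness I would use the product structure of the saddle-point set. If $(\bm{p}_1,\bm{q}_1;\bm{\mu}_1)$ and $(\bm{p}_2,\bm{q}_2;\bm{\mu}_2)$ are both saddle points, then $\cL_\phi$ attains a common saddle value at both, and the convex–concave saddle inequalities imply that the ``cross'' point $(\bm{p}_1,\bm{q}_1;\bm{\mu}_2)$ is again a saddle point. Consequently $(\bm{p}_1,\bm{q}_1)$ and $(\bm{p}_2,\bm{q}_2)$ both minimize $\cL_\phi(\cdot,\cdot;\bm{\mu}_2)$, while $\bm{\mu}_1$ and $\bm{\mu}_2$ both maximize $\cL_\phi(\bm{p}_1,\bm{q}_1;\cdot)$. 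Strong convexity in $(\bm{p},\bm{q})$ forces $(\bm{p}_1,\bm{q}_1)=(\bm{p}_2,\bm{q}_2)$ and strong concavity in $\bm{\mu}$ forces $\bm{\mu}_1=\bm{\mu}_2$, giving uniqueness.

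I expect the main obstacle to be the existence step rather than uniqueness: the unbounded dual feasible set prevents a direct application of a compact-domain minimax theorem, and the argument hinges on using the quadratic regularizer to produce the a priori bound $M$ on the optimal multipliers. A secondary subtlety is verifying that strong convexity genuinely survives after adding the merely-convex load-cost term and the affine coupling term, which it does because strong convexity is preserved under addition of convex functions.
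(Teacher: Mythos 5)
Your proof is correct, and its core is exactly the fact the paper invokes: the paper's entire proof of Theorem~\ref{the:unique} is the one-sentence observation that $\mathcal{L}_{\phi}$ is strongly convex in $(\bm{p},\bm{q})$ and strongly concave in $(\underline{\bm{\mu}},\overline{\bm{\mu}})$. What you add beyond the paper is the part it leaves implicit, and it is the genuinely nontrivial part: existence over the unbounded dual orthant. Your a priori bound via the closed-form inner maximizer $\bm{\mu}=\max\{\bm{0},\bm{g}/\phi\}$, giving $\|\bm{\mu}\|_2\le G/\phi$ on the compact $\bm{\cY}$, legitimately reduces the problem to a compact convex product domain where Sion's theorem (plus attainment by compactness and continuity) applies; your uniqueness step via interchangeability of saddle points together with strong convexity/concavity is the standard and correct completion. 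So this is the same approach as the paper, carried out in full rather than asserted; the one small point worth stating explicitly in your write-up is the verification that a saddle point of the truncated problem remains a saddle point over all of $\{\bm{\mu}\ge 0\}$, which holds because for each fixed primal point the maximizer over the orthant already lies inside your ball $\cM$.
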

\begin{proof}
The result follows from the fact that $\cL_{\phi}(\bm{p},\bm{q};\underline{\bm{\mu}},\overline{\bm{\mu}})$ is strongly convex in $\bm{p},\bm{q}$, and strongly concave in $\underline{\bm{\mu}},\overline{\bm{\mu}}$. 
\end{proof}

\begin{theorem}\label{the:regularization}
The difference between the saddle points of \eqref{eq:lang} and \eqref{eq:langr} is bounded, and it is proportional to $\phi$. 
\end{theorem}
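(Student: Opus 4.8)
The plan is to characterize each saddle point through its first‑order (variational inequality) optimality conditions over the feasible domain and then compare the two conditions by exploiting the convex–concave structure. Write $\bm{y}=(\bm{p},\bm{q})$ and collect the full variable as $(\bm{y},\bm{\mu})$ over the domain $Z:=\cY\times\mathbb{R}^{2N}_{\geq0}$, where $\cY=\prod_{i\in\cN}\cY_i$ and $\bm{\mu}=[\underline{\bm{\mu}}^{\top},\overline{\bm{\mu}}^{\top}]^{\top}\geq 0$. Let $(\bm{y}^*,\bm{\mu}^*)$ be a saddle point of $\cL$ in \eqref{eq:lang} and let $(\bm{y}^*_\phi,\bm{\mu}^*_\phi)$ be the unique saddle point of $\cL_\phi$ guaranteed by Theorem~\ref{the:unique}. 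Since each $C_i$ is strongly convex and $C_0\circ P_0$ is convex, the primal part of $\cL$ is strongly convex in $\bm{y}$ with some modulus $m>0$; the dependence on $\bm{\mu}$ is linear, and the coupling between $\bm{y}$ and $\bm{\mu}$ is bilinear through the affine map $\bm{v}(\bm{y})=R\bm{p}+X\bm{q}+\tilde{\bm{v}}$.

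First I would encode the two saddle conditions as the variational inequalities $\langle T(\bm{y}^*,\bm{\mu}^*),(\bm{y},\bm{\mu})-(\bm{y}^*,\bm{\mu}^*)\rangle\geq 0$ and $\langle T_\phi(\bm{y}^*_\phi,\bm{\mu}^*_\phi),(\bm{y},\bm{\mu})-(\bm{y}^*_\phi,\bm{\mu}^*_\phi)\rangle\geq 0$ for all $(\bm{y},\bm{\mu})\in Z$, where $T$ stacks $\nabla_{\bm{y}}\cL$ and $-\nabla_{\bm{\mu}}\cL$, and $T_\phi=T+\phi(\bm{0};\bm{\mu})$. The key structural facts are that $T$ is monotone with $\langle T(z)-T(z'),z-z'\rangle=\langle\nabla_{\bm{y}}f(\bm{y})-\nabla_{\bm{y}}f(\bm{y}'),\bm{y}-\bm{y}'\rangle\geq m\|\bm{y}-\bm{y}'\|^2$ — the bilinear cross terms cancel because they are skew–symmetric and the $\bm{\mu}$–diagonal vanishes — and that the added term renders $T_\phi$ additionally $\phi$–strongly monotone in $\bm{\mu}$.

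Next I would substitute the two saddle points into each other's variational inequality (put $(\bm{y}^*_\phi,\bm{\mu}^*_\phi)$ into the first and $(\bm{y}^*,\bm{\mu}^*)$ into the second) and add. The cross terms cancel and I am left with an inequality of the form $m\|\bm{y}^*-\bm{y}^*_\phi\|^2\leq\phi\langle\bm{\mu}^*_\phi,\bm{\mu}^*-\bm{\mu}^*_\phi\rangle$. Completing the square on the right, using $\langle a,b-a\rangle\leq\tfrac14\|b\|^2$, yields $\|\bm{y}^*-\bm{y}^*_\phi\|^2\leq\tfrac{\phi}{4m}\|\bm{\mu}^*\|^2$, and the same inequality shows $\|\bm{\mu}^*_\phi\|\leq\|\bm{\mu}^*\|$, so the regularized multipliers stay bounded uniformly in $\phi$. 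Hence the saddle points move by an amount controlled by $\phi$ and by the size of the optimal multiplier; in particular the saddle–value gap $\tfrac{\phi}{2}\|\bm{\mu}^*_\phi\|^2$ is $O(\phi)$, which establishes the bounded, $\phi$–scaled dependence claimed.

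The main obstacle is that $T$ is only monotone, not strongly monotone, in the dual block, so the unregularized saddle point need not be unique and the dual difference $\|\bm{\mu}^*-\bm{\mu}^*_\phi\|$ is not immediately small. I would resolve this by fixing $\bm{\mu}^*$ to be the least–norm optimal multiplier — which exists and is bounded because $\cY$ is compact and a Slater point for \eqref{eq:voltreg} (a $(\bm{p},\bm{q})\in\cY$ with $\underline{\bm{v}}<\bm{v}(\bm{p},\bm{q})<\overline{\bm{v}}$) makes the multiplier set bounded — and by invoking the boundedness just derived. The proportionality to $\phi$ then follows in the senses above, and a first–order distance bound on the full vector $(\bm{y},\bm{\mu})$ can be sharpened, if desired, under a standard error–bound/metric–subregularity condition on $T$ at $(\bm{y}^*,\bm{\mu}^*)$.
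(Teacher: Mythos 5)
Your proof is correct, but note that the paper itself contains no argument for Theorem~\ref{the:regularization}: it defers entirely to \cite{koshal2011multiuser,zhou2017incentive}. What you have written is essentially a self-contained reconstruction of the Tikhonov-regularization error analysis used in those references: encode each saddle point as a variational inequality over $\cY\times\mathbb{R}^{2N}_{\geq 0}$, substitute the points into each other's inequality and add, use skew-symmetry of the bilinear coupling (so only the strongly convex primal block and the $\phi$-term survive), and conclude $m\|\bm{y}^*-\bm{y}^*_\phi\|^2\leq\phi\langle\bm{\mu}^*_\phi,\bm{\mu}^*-\bm{\mu}^*_\phi\rangle\leq\frac{\phi}{4}\|\bm{\mu}^*\|^2$ together with $\|\bm{\mu}^*_\phi\|\leq\|\bm{\mu}^*\|$; each of these steps checks out. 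Two remarks. First, you correctly surface the hypotheses the paper leaves implicit: existence of a saddle point of the unregularized Lagrangian requires a Slater point for \eqref{eq:voltreg}, and since the unregularized multiplier need not be unique, fixing the least-norm multiplier (bounded under Slater with compact $\cY$) is the right repair; your further caveat that $\|\bm{\mu}^*-\bm{\mu}^*_\phi\|$ cannot be bounded without an extra error-bound/metric-subregularity condition is accurate, and in this respect your treatment is more careful than the theorem's wording. Second, be precise about what ``proportional to $\phi$'' means: your bound makes the \emph{squared} primal distance and the saddle-value gap $O(\phi)$, hence $\|\bm{y}^*-\bm{y}^*_\phi\|=O(\sqrt{\phi})$; this is exactly the form of the bounds in the cited references, so the theorem's loose phrasing should be read in that sense rather than as a norm bound linear in $\phi$.
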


We refer the proof of Theorem~\ref{the:regularization} to \cite{koshal2011multiuser,zhou2017incentive}.

Then, the iterative primal-dual gradient algorithm to find the saddle point of \eqref{eq:langr} is cast as follows:
\begin{subequations}\label{eq:primaldual}
\begin{eqnarray}
    \hspace{-2mm}\bm{p}(t+1)\hspace{-3mm}&=&\hspace{-3mm}\Big[\bm{p}(t)-\epsilon\Big(\!\nabla_{\bm{p}} C(\bm{p}(t),\!\bm{q}(t))\!-\!C'_0(P_0(\bm{p}(t)))\!\cdot\! \bm{1}_N\nonumber\\[-2pt]
    \hspace{-2mm}&&\hspace{22mm}+R^{\top}(\overline{\bm{\mu}}(t)-\underline{\bm{\mu}}(t))\Big)\Big]_{\bm{\cY}},\label{eq:primalP}\\[-2pt]
    \hspace{-2mm}\bm{q}(t+1)\hspace{-3mm}&=&\hspace{-3mm}\Big[\bm{q}(t)-\epsilon\Big(\!\nabla_{\bm{q}} C(\bm{p}(t),\bm{q}(t)) \nonumber\\[-2pt]
    \hspace{-2mm}&&\hspace{22mm}+X^{\top}(\overline{\bm{\mu}}(t)-\underline{\bm{\mu}}(t))\Big)\Big]_{\bm{\cY}},\label{eq:primalQ}\\[-2pt]
    \hspace{-2mm}\underline{\bm{\mu}}(t+1)\hspace{-3mm}&=&\hspace{-3mm}\big[\underline{\bm{\mu}}(t)+\epsilon (\underline{\bm{v}}-\bm{v}(t)-\phi   \underline{\bm{\mu}}(t))\big]_+,\label{eq:dual1}\\
    \hspace{-2mm}\overline{\bm{\mu}}(t+1)\hspace{-3mm}&=&\hspace{-3mm}\big[\overline{\bm{\mu}}(t)+\epsilon (\bm{v}(t)-\overline{\bm{v}}-  \phi\overline{\bm{\mu}}(t))\big]_+,\label{eq:dual2}
\end{eqnarray}
\end{subequations}
where $\epsilon>0$ is some constant stepsize to be determined; $\bm{1}_N=[1,\ldots,1]^{\top}\in\mathbb{R}^N$, $[\  ]_{\bm{\cY}}$ is the projection operator onto the feasible set $\bm{\cY}:=\bigtimes_{i\in\cN}\cY_i$; and $[ \ ]_{+}$ is the projection operator onto the positive orthant.

\subsection{Convergence}
We use $\bm{y}:=[\bm{p}^{\top},\bm{q}^{\top}]^{\top}$ to stack all the primal variables and rewrite \eqref{eq:primaldual} equivalently as follows:
\begin{eqnarray}
\begin{bmatrix}\bm{y}(t+1)\\ \bm{\mu}(t+1)\end{bmatrix} =\left[\begin{bmatrix}\bm{y}(t)\\ \bm{\mu}(t)\end{bmatrix}-\epsilon\begin{bmatrix}\nabla_{\bm{y}} \hL_{\phi}(\bm{y}(t),\bm{\mu}(t)) \\ -\nabla_{\bm{\mu}}\hL_{\phi}(\bm{y}(t),\bm{\mu}(t))
\end{bmatrix} \right]_{\bm{\cY}\times \bm{\cU}}\hspace{-4mm},\label{eq:mapk}
\end{eqnarray}
where $\bm{\cU}$ is the feasible positive orthant for the dual variables.
We further let $\bm{z}:=[\bm{y}^{\top},\bm{\mu}^{\top}]^{\top}$ stack all variables, and $T(\bm{z})$ denote the operator $\begin{bmatrix}\nabla_{\bm{y}} \hL_{\phi}(\bm{y},\bm{\mu}) \\ -\nabla_{\bm{\mu}}\hL_{\phi}(\bm{y},\bm{\mu})\end{bmatrix}$.

\begin{lemma}\label{lem:mono}
$T(\bm{z})$ is a strongly monotone operator.
\end{lemma}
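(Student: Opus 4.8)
The plan is to verify the defining inequality of strong monotonicity directly: I will exhibit a constant $m>0$ such that $(T(\bm{z}_1)-T(\bm{z}_2))^{\top}(\bm{z}_1-\bm{z}_2)\geq m\|\bm{z}_1-\bm{z}_2\|^2$ for all feasible $\bm{z}_1,\bm{z}_2$. First I would write $T$ out blockwise from $\cL_{\phi}$. Stacking the sensitivity matrices as $M:=[R\ X]$ so that $\bm{v}(\bm{p},\bm{q})=M\bm{y}+\tilde{\bm{v}}$ is affine in $\bm{y}$, and writing $f(\bm{y}):=\sum_{i\in\cN}C_i(p_i,q_i)+C_0(P_0(\bm{p}))$ for the primal objective, the gradient map reads
\[
T(\bm{z})=\begin{bmatrix}\nabla_{\bm{y}}f(\bm{y})+M^{\top}(\overline{\bm{\mu}}-\underline{\bm{\mu}})\\ M\bm{y}+\tilde{\bm{v}}-\underline{\bm{v}}+\phi\underline{\bm{\mu}}\\ \overline{\bm{v}}-M\bm{y}-\tilde{\bm{v}}+\phi\overline{\bm{\mu}}\end{bmatrix}.
\]

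The crucial step is to form the inner product $(T(\bm{z}_1)-T(\bm{z}_2))^{\top}(\bm{z}_1-\bm{z}_2)$ and observe that every term coupling the primal and dual increments through $M$ cancels. Writing $\Delta\bm{y},\Delta\underline{\bm{\mu}},\Delta\overline{\bm{\mu}}$ for the respective differences, the primal block contributes $(\Delta\overline{\bm{\mu}}-\Delta\underline{\bm{\mu}})^{\top}M\Delta\bm{y}$, while the two dual blocks contribute $(M\Delta\bm{y})^{\top}\Delta\underline{\bm{\mu}}-(M\Delta\bm{y})^{\top}\Delta\overline{\bm{\mu}}$; since the former equals $(M\Delta\bm{y})^{\top}\Delta\overline{\bm{\mu}}-(M\Delta\bm{y})^{\top}\Delta\underline{\bm{\mu}}$, the two are exactly equal and opposite and sum to zero. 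This is the standard skew-symmetry of the bilinear Lagrangian coupling, and it is the reason the sensitivity matrices $R,X$ drop out of the monotonicity estimate entirely.

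After this cancellation only two contributions remain: the primal term $(\nabla f(\bm{y}_1)-\nabla f(\bm{y}_2))^{\top}\Delta\bm{y}$ and the dual term $\phi(\|\Delta\underline{\bm{\mu}}\|^2+\|\Delta\overline{\bm{\mu}}\|^2)=\phi\|\Delta\bm{\mu}\|^2$ coming from the regularizer. By Assumption~\ref{ass:costfun} each $C_i$ is strongly convex and $C_0(P_0(\cdot))$ is convex (a convex function composed with the affine map $P_0$), so $f$ is strongly convex in $\bm{y}$ with some modulus $m_f>0$; hence the primal term is bounded below by $m_f\|\Delta\bm{y}\|^2$. Combining, $(T(\bm{z}_1)-T(\bm{z}_2))^{\top}(\bm{z}_1-\bm{z}_2)\geq m_f\|\Delta\bm{y}\|^2+\phi\|\Delta\bm{\mu}\|^2\geq\min\{m_f,\phi\}\,\|\bm{z}_1-\bm{z}_2\|^2$, which proves strong monotonicity with $m=\min\{m_f,\phi\}$.

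The only delicate point is the cancellation in the second step; it relies on the coupling being bilinear, which in turn rests on the LinDistFlow model making $\bm{v}$ affine in $(\bm{p},\bm{q})$. I expect verifying that cancellation carefully---tracking the signs of the $\underline{\bm{\mu}}$ and $\overline{\bm{\mu}}$ contributions---to be the main bookkeeping obstacle, whereas the strong convexity of $f$ and the positivity of $\phi$ make the remaining bound essentially immediate. Note the estimate does not require $C_0$ to be strongly convex: the $\phi$-regularization supplies the curvature missing in the dual variables, while the strong convexity of the $C_i$ supplies it in the primal variables.
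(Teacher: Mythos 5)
Your proof is correct and follows essentially the same route as the paper's: the paper decomposes $T$ into the gradient of the strongly convex part $\bigl(f(\bm{y}),\tfrac{\phi}{2}\|\bm{\mu}\|_2^2\bigr)$ plus a skew-symmetric linear operator built from $R,X$ (which is monotone since the matrix plus its transpose vanishes), which is exactly the cancellation of the bilinear coupling terms you verify by direct expansion. Your version additionally makes the modulus explicit, $m=\min\{m_f,\phi\}$, and correctly identifies that only convexity (not strong convexity) of $C_0$ is needed, but the underlying argument is the same.
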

\begin{proof}
	See Appendix.
\end{proof}

By Lemma~\ref{lem:mono}, there exists some constant $M>0$ such that for any $\bm{z},\bm{z'}\in\bm{\cY}\times \bm{\cU}$, one has:
\begin{eqnarray}
(T(\bm{z})-T(\bm{z'}))^{\top}(\bm{z}-\bm{z'})\geq M \|\bm{z}-\bm{z'}\|_2^2.\label{eq:strongmono}
\end{eqnarray}
Moreover, based on Assumption~\ref{ass:costfun}, the operator $T(\bm{z})$ is also Lipschitz continuous, i.e., there exists some constant $L>0$ such that for any feasible $\bm{z}$ and $\bm{z'}$, we have:
\begin{eqnarray}
\|T(\bm{z})-T(\bm{z'})\|_2^2\leq L^2 \|\bm{z}-\bm{z'}\|_2^2.\label{eq:lipschitz}
\end{eqnarray}
We present the next theorem that ensures the convergence of \eqref{eq:mapk} with a small enough stepsize. 
\begin{theorem}\label{the:converge}
If the stepsize $\epsilon$ satisfies $0<\epsilon\leq \overline{\epsilon}<2M/L^2$ for some $\overline{\epsilon}$, \eqref{eq:mapk} converges to the unique saddle point of \eqref{eq:langr} exponentially fast.
\end{theorem}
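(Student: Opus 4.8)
The plan is to read the iteration \eqref{eq:mapk} as a projected forward‑gradient map $\bm{z}\mapsto F(\bm{z})$ with $F(\bm{z}):=[\bm{z}-\epsilon T(\bm{z})]_{\bm{\cY}\times\bm{\cU}}$, and to show that $F$ is a strict contraction in the Euclidean norm whenever $0<\epsilon<2M/L^2$; the Banach fixed‑point theorem then yields geometric (exponential) convergence to the unique fixed point. First I would identify that fixed point: the unique saddle point $\bm{z}^*:=[(\bm{p}^*)^{\top},(\bm{q}^*)^{\top},(\underline{\bm{\mu}}^*)^{\top},(\overline{\bm{\mu}}^*)^{\top}]^{\top}$ from Theorem~\ref{the:unique} satisfies the variational inequality $(T(\bm{z}^*))^{\top}(\bm{z}-\bm{z}^*)\ge 0$ for all $\bm{z}\in\bm{\cY}\times\bm{\cU}$, which is exactly the condition $\bm{z}^*=F(\bm{z}^*)$. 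Hence analyzing convergence of \eqref{eq:mapk} reduces to contracting the distance $\|\bm{z}(t)-\bm{z}^*\|_2$.

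Next I would establish the contraction estimate. Because projection onto the closed convex set $\bm{\cY}\times\bm{\cU}$ is nonexpansive, and using $\bm{z}^*=F(\bm{z}^*)$,
\begin{eqnarray}
\|\bm{z}(t+1)-\bm{z}^*\|_2 &\le& \|(\bm{z}(t)-\bm{z}^*)-\epsilon(T(\bm{z}(t))-T(\bm{z}^*))\|_2. \nonumber
\end{eqnarray}
Writing $\bm{w}:=\bm{z}(t)-\bm{z}^*$ and squaring,
\begin{eqnarray}
\|\bm{w}-\epsilon(T(\bm{z}(t))-T(\bm{z}^*))\|_2^2 &=& \|\bm{w}\|_2^2 -2\epsilon (T(\bm{z}(t))-T(\bm{z}^*))^{\top}\bm{w} \nonumber\\
&& +\, \epsilon^2\|T(\bm{z}(t))-T(\bm{z}^*)\|_2^2. \nonumber
\end{eqnarray}
Applying the strong monotonicity \eqref{eq:strongmono} (from Lemma~\ref{lem:mono}) to lower‑bound the cross term by $2\epsilon M\|\bm{w}\|_2^2$, and the Lipschitz bound \eqref{eq:lipschitz} to upper‑bound the final term by $\epsilon^2 L^2\|\bm{w}\|_2^2$, gives the one‑step decay
\begin{eqnarray}
\|\bm{z}(t+1)-\bm{z}^*\|_2^2 &\le& (1-2\epsilon M+\epsilon^2 L^2)\,\|\bm{z}(t)-\bm{z}^*\|_2^2. \nonumber
\end{eqnarray}

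Finally I would verify that the factor $\rho:=\sqrt{1-2\epsilon M+\epsilon^2 L^2}$ is strictly below one. The quadratic $1-2\epsilon M+\epsilon^2 L^2$ equals one at the two endpoints $\epsilon=0$ and $\epsilon=2M/L^2$ and lies strictly below one in between, so the hypothesis $0<\epsilon\le\overline{\epsilon}<2M/L^2$ guarantees $\rho<1$. Iterating the decay then yields $\|\bm{z}(t)-\bm{z}^*\|_2\le \rho^{t}\|\bm{z}(0)-\bm{z}^*\|_2$, i.e., convergence to $\bm{z}^*$ at the exponential rate $\rho$. The hard part will be the first step—cleanly justifying that the constrained saddle point coincides with the fixed point of the projected operator via the variational‑inequality characterization—since the remainder of the argument (nonexpansiveness of the projection plus the algebraic combination of strong monotonicity and Lipschitz continuity) is routine once Lemma~\ref{lem:mono} and \eqref{eq:lipschitz} are invoked.
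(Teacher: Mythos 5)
Your proposal is correct and follows essentially the same route as the paper's own proof: nonexpansiveness of the projection, followed by the strong monotonicity bound \eqref{eq:strongmono} and the Lipschitz bound \eqref{eq:lipschitz}, yielding the one-step contraction factor $1-2\epsilon M+\epsilon^2 L^2<1$ for $0<\epsilon\le\overline{\epsilon}<2M/L^2$. Your explicit variational-inequality justification that the saddle point is a fixed point of the projected map is a step the paper leaves implicit (it simply uses $\bm{z}^*=[\bm{z}^*-\epsilon T(\bm{z}^*)]_{\bm{\cY}\times\bm{\cU}}$ inside the first inequality), and the paper additionally remarks that $M\le L$ keeps the contraction factor nonnegative---a detail worth noting when you take its square root to define $\rho$.
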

\begin{proof}
	See Appendix.
\end{proof}

\subsection{Motivation for Hierarchical Design}
Note that in \eqref{eq:primaldual} the update of any $p_i$ (resp. $q_i$) involves the knowledge of $\sum_{j\in\cN}R_{ij}(\overline{\mu}_j-\underline{\mu}_j)$ (resp. $\sum_{j\in\cN}X_{ij}(\overline{\mu}_j-\underline{\mu}_j)$). Therefore, at each iteration a coordinator cognizant of the entire network  sensitivity matrices $R$ and $X$ is required to first collect updated dual variables from all the nodes, calculate $R^{\top}(\overline{\bm{\mu}}-\underline{\bm{\mu}})$ and $X^{\top}(\overline{\bm{\mu}}-\underline{\bm{\mu}})$, and then send the results back to the corresponding nodes. 
This becomes computationally more challenging in a larger network containing thousands of or even more controllable endpoints, not to mention the recalculation of the huge $R,X$ matrices in case changes in network topology or regulator taps occur.

This motivates us to design a hierarchical control structure where the large network is partitioned into smaller AGs, each of which is managed locally by its own RC, and there is a CC that manages only a reduced network where each AG is treated as one node. As will be shown in the next section, the hierarchical control design not only distributes the computational burden but also reduces a huge amount of the computation by utilizing the network structure.

\begin{algorithm*}[h]
	\caption{Hierarchical Distributed Voltage Regulation} 
	\begin{algorithmic}\label{alg:distalg}
		
		\REPEAT
	
		\STATE[1] node $i\in\cN$ updates $(p_i(t+1),q_i(t+1))$ by
		\begin{subequations}
			\begin{eqnarray}
	        	p_i(t+1)&=&[p_i(t)-\epsilon(\partial_{p_i} C_i(p_i(t),q_i(t)) - C'_0(P_0(\bm{p}(t))) +\alpha_{i}(t))]_{\cY_i},\\ q_i(t+1)&=&[q_i(t)-\epsilon(\partial_{q_i} C_i(p_i(t),q_i(t)) +\beta_{i}(t))]_{\cY_i},
	    	\end{eqnarray}
		\end{subequations}
	
		\hspace{5mm} and $(\underline{\mu}_i(t+1),\overline{\mu}_i(t+1))$ based on local voltage by
		\begin{eqnarray}
	 		 \underline{\mu}_i(t+1)=[\underline{\mu}_i(t)+\epsilon (\underline{v}_i-v_i(t)-\phi \underline{\mu}_i(t))]_+,\ \  \overline{\mu}_i(t+1)=[\overline{\mu}_i(t)+\epsilon (v_i(t)-\overline{v}_i-\phi \overline{\mu}_i(t))]_+.
		\end{eqnarray}
		
		\STATE[2] 
		RC $k\in\cK$ calculates and sends $\underset{i\in\cN_k}{\sum}\big(\overline{\mu}_i(t+1)-\underline{\mu}_i(t+1)\big)$ to CC; unclustered node $i\in\cN_0$ sends $(\overline{\mu}_i(t+1)-\underline{\mu}_i(t+1))$ to CC.

		\STATE[3] CC 
		computes within the reduced network
		\begin{subequations}
			\begin{eqnarray}
			\alpha_k^{\text{out}}(t+1)&=&\hspace{-3mm}\sum_{h\in\cK,h\neq k}R_{n^0_h n^0_k}\sum_{j\in\cN_h}(\overline{\mu}_j(t+1)-\underline{\mu}_j(t+1))+\sum_{j\in\cN_0}R_{jn_k^0}(\overline{\mu}_j(t+1)-\underline{\mu}_j(t+1)),\ \forall k\in\cK,\\
			\beta_k^{\text{out}}(t+1)&=&\hspace{-3mm}\sum_{h\in\cK,h\neq k}X_{n^0_h n^0_k}\sum_{j\in\cN_h}(\overline{\mu}_j(t+1)-\underline{\mu}_j(t+1))+\sum_{j\in\cN_0}X_{jn_k^0}(\overline{\mu}_j(t+1)-\underline{\mu}_j(t+1)),\ \forall k\in\cK,
			\end{eqnarray}
		\end{subequations}
		\hspace{5mm} and \vspace{-7mm}
		\begin{subequations}
		\begin{eqnarray}
		\alpha_i(t+1)&=&\sum_{k\in\cK}R_{i n^0_k}\sum_{j\in\cN_k}(\overline{\mu}_j(t+1)-\underline{\mu}_j(t+1))+\sum_{j\in\cN_0}R_{i j} (\overline{\mu}_j(t+1)-\underline{\mu}_j(t+1)),\ \forall i\in\cN_0,\\
		\beta_i(t+1)&=& \sum_{k\in\cK}X_{i n^0_k}\sum_{j\in\cN_k}(\overline{\mu}_j(t+1)-\underline{\mu}_j(t+1))+\sum_{j\in\cN_0}X_{i j} (\overline{\mu}_j(t+1)-\underline{\mu}_j(t+1)),\ \forall i\in\cN_0,
		\end{eqnarray}
	\end{subequations}
		\hspace{5mm} and sends $(\alpha_k^{\text{out}}(t+1),\beta_k^{\text{out}}(t+1))$ to RC $k\in\cK$,  and $(\alpha_i(t+1),\beta_i(t+1))$ to unclustered node $i\in\cN_0$.

			\STATE[4] RC $k\in\cK$ calculates within AG $k$:
		\begin{subequations}
			\begin{eqnarray}
			&\alpha^{\text{in}}_{k,i}(t+1)=\sum_{j\in\cN_k}R_{ij}(\overline{\mu}_j(t+1)-\underline{\mu}_j(t+1)),\ \text{and} &\alpha_{i}(t+1)=\alpha^{\text{in}}_{k,i}(t+1)+\alpha^{\text{out}}_k(t+1),\ \forall i\in\cN_k,
			\\
			&\beta^{\text{in}}_{k,i}(t+1)=\sum_{j\in\cN_k}X_{ij}(\overline{\mu}_j(t+1)-\underline{\mu}_j(t+1)),\ \text{and} &\beta_{i}(t+1)=\beta^{\text{in}}_{k,i}(t+1)+\beta^{\text{out}}_k(t+1),\ \forall i\in\cN_k,
			\end{eqnarray}
		\end{subequations}
		\hspace{5mm} and sends $(\alpha_i(t+1),\beta_{i}(t+1))$ to node $i\in\cN_k$.

		\STATE[5] $\bm{v}(t+1)$ and $P_0(t+1)$ are updated by the physical system:
		\begin{eqnarray}
		\bm{v}(t+1)=R\bm{p}(t+1)+X\bm{q}(t+1)+\tilde{\bm{v}},\ \ \ \ \  P_0(t+1)=P_I+\sum_{i\in\cN}p_i(t+1).
		\end{eqnarray}
		
		\STATE[6] CC computes/measures $P_0(t+1)$ at the substation and broadcasts $C'_0 (P_0(\bm{p}(t+1)) )$.
		
		\UNTIL stopping criterion is met (e.g., $|P_0(\bm{p}(t+1))-P_0(\bm{p}(t))|<\sigma$ for some small $\sigma>0$)
	\end{algorithmic}
\end{algorithm*}

\section{Hierarchical Distributed Algorithm}\label{sec:hier}

\subsection{Distribution Feeder as Networked Autonomous Grids}

\begin{definition}
A subtree of a tree $\cT$ is a tree consisting of a node in $\cT$, \textbf{all of its descendants} in $\cT$, and their connecting lines.
\end{definition}

We group all the nodes of the distribution network $\cT$ into (1) $K$ subtrees indexed by $\cT_k=\{\cN_k,\cE_k\},\ k\in\cK=\{1,\ldots,K\}$, and (2) a set $\cN_0$ collecting all the other ``unclustered" nodes in $\cN$. Here, $\cN_k$ of size $N_k$ is the set of nodes in subtree $\cT_k$ and $\cE_k$ contains their connecting lines. Thus, we have $\cup_{k\in\cK}\cN_k\cup\cN_0=\cN$ and $\cN_j \cap \cN_k=\emptyset,\forall j\neq k$.
Assume each subtree $\cT_k$ is an AG managed by an RC cognizant of the topology of $\cT_k$ and communicating with all the controllable nodes within $\cT_k$.

Denote the root node of subtree $\cT_k$ by $n_k^0$, and consider a reduced network $\cT^r=\{\cN^r\cup\{0\},\cE^r\}$ where $\cN^r:=\cup_{k\in\cK}\{n_k^0\}\cup \cN_0$ consists of the root nodes of subtrees and all the unclustered nodes, and $\cE^r$ consists of their connecting lines. 
We assume a CC cognizant of the topology of the reduced network $\cT^r$ and communicating with all the RCs as well as the unclustered nodes. 

Since each indexed subtree is considered as an AG, in this paper, we use the terms ``subtree" and ``AG" interchangeably.

  \begin{figure}[h]
 	\centering
 	\includegraphics[trim = 0mm 0mm 0mm 0mm, clip, scale=0.4]{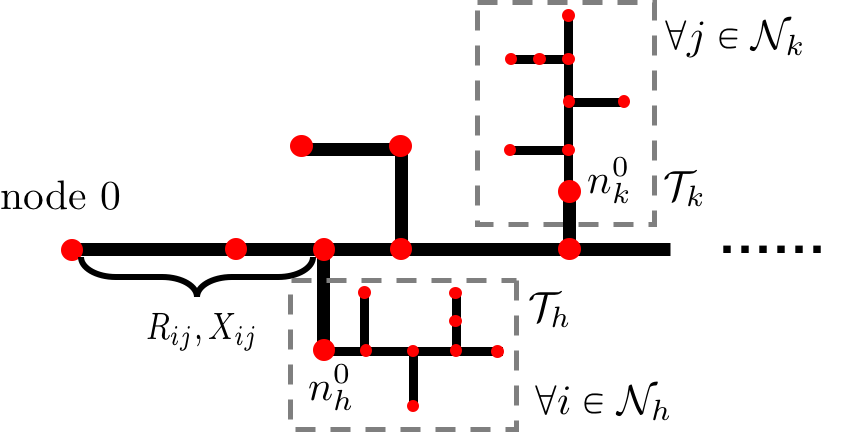}
 	\caption{The unclustered nodes and the root nodes of subtrees together with their connecting lines constitute the reduced network. Two subtrees $\cT_h$ and $\cT_k$ share the same $R_{ij}$ and $X_{ij}$ for any of their respective nodes $i$ and $j$.}
 	\label{fig:RijXij}
 \end{figure}

\subsection{Hierarchical Distributed Algorithm}

In this part, we design a hierarchical distributed algorithm for the networked AG structure introduced previously, by exploring the structure of the network matrices $R$ and $X$.
For simplicity, we elaborate the algorithm design for real power injections $\bm{p}$ only, and that of $\bm{q}$ follows similarly.

Rewrite \eqref{eq:primalP} as:
\begin{eqnarray}
    &&\hspace{-8mm}p_i(t+1)=\Big[p_i(t)-\epsilon\Big(\partial_{p_i} C_i(p_i(t),q_i(t))-C'_0(P_0(\bm{p}(t)))\nonumber\\
    &&\hspace{10mm}+\sum_{j\in\cN}R_{ij}\big(\overline{\mu}_j(t)-\underline{\mu}_j(t)\big)\Big) \Big]_{\cY_i}, \forall i\in\cN.\label{eq:recastP}
\end{eqnarray}
Note that in \eqref{eq:recastP}, although $\partial_{p_i} C_i(p_i(t),q_i(t))$ is local information and the scalar $C'_0(P_0(\bm{p}(t)))$ can be easily broadcast, the last term $\sum_{j\in\cN}R_{ij}(\overline{\mu}_j(t)-\underline{\mu}_j(t))$ couples the whole network in principle.

To design a more scalable algorithm, we first introduce the following lemma.

\begin{lemma}\label{lem:commonpath}
Given any two subtrees $\cT_h$ and $\cT_k$ with their root nodes $n^0_h$ and $n^0_k$, we have $R_{ij}=R_{n^0_h n^0_k},X_{ij}=X_{n^0_h n^0_k}, \forall i\in\cN_h, \forall j\in\cN_k$. Similarly, given any unclustered node $i\in\cN_0$ and a subtree $\cT_k$ with its root node $n^0_k$, we have $R_{ij}=R_{i n^0_k},X_{ij}=X_{i n^0_k}, \forall j\in\cN_k$.
\end{lemma}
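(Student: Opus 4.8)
The plan is to reduce the identity on sensitivity factors to a purely combinatorial identity on edge sets. Since $R_{ij}=\sum_{(a,b)\in\cE_i\cap\cE_j} r_{ab}$ and $X_{ij}=\sum_{(a,b)\in\cE_i\cap\cE_j} x_{ab}$ depend on the pair $(i,j)$ only through the common-path edge set $\cE_i\cap\cE_j$, it suffices to show, for two distinct subtrees $\cT_h,\cT_k$, that $\cE_i\cap\cE_j=\cE_{n^0_h}\cap\cE_{n^0_k}$ for every $i\in\cN_h$ and every $j\in\cN_k$; the claimed equalities $R_{ij}=R_{n^0_h n^0_k}$ and $X_{ij}=X_{n^0_h n^0_k}$ then follow by summing identical per-edge terms. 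The second statement is the same identity with the unclustered node $i$ playing the role of a singleton subtree whose root is $i$ itself.

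First I would record the elementary tree fact that in the radial network $\cT$ the common path of two nodes is the root-to-meeting-point path: for any $a,b$ one has $\cE_a\cap\cE_b=\cE_{m(a,b)}$, where $m(a,b)$ is the deepest node lying on both the $0$-to-$a$ and the $0$-to-$b$ paths (the least common ancestor). The easy inclusion is immediate from the subtree definition: because $i\in\cN_h$ is $n^0_h$ or one of its descendants, the unique path from $0$ to $i$ passes through $n^0_h$, so $\cE_{n^0_h}\subseteq\cE_i$, and likewise $\cE_{n^0_k}\subseteq\cE_j$; intersecting gives $\cE_{n^0_h}\cap\cE_{n^0_k}\subseteq\cE_i\cap\cE_j$.

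The heart of the argument, and the step I expect to be the main obstacle, is the reverse inclusion, which amounts to proving $m(i,j)=m(n^0_h,n^0_k)$. Here I would exploit the full strength of the subtree definition --- that a subtree contains \emph{all} descendants of its root --- together with the disjointness $\cN_h\cap\cN_k=\emptyset$. Concretely, I would argue that the meeting point $m(i,j)$ cannot lie strictly below $n^0_h$: if it did, then $m(i,j)$ would be a descendant of $n^0_h$, hence $m(i,j)\in\cN_h$, and since $j$ is a descendant of $m(i,j)$ it would follow that $j\in\cN_h$, contradicting $\cN_h\cap\cN_k=\emptyset$; symmetrically $m(i,j)$ cannot lie strictly below $n^0_k$. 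Thus $m(i,j)$ is a common ancestor of both roots, and disjointness also forbids either root from being an ancestor of the other. Conversely, writing $m':=m(n^0_h,n^0_k)$, the $0$-to-$i$ path runs through $n^0_h$ and the $0$-to-$j$ path through $n^0_k$, so both paths share the segment up to $m'$ and then diverge into the two distinct children of $m'$ leading to $n^0_h$ and $n^0_k$ respectively; hence $m(i,j)=m'$. Combining the two directions yields $\cE_i\cap\cE_j=\cE_{m'}=\cE_{n^0_h}\cap\cE_{n^0_k}$, which is the desired edge-set identity.

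Finally, for the unclustered-node case I would run the identical divergence argument with $i$ in place of $n^0_h$, using $i\in\cN_0$ and $\cN_0\cap\cN_k=\emptyset$ to guarantee $i\notin\cN_k$, i.e. $i$ is neither $n^0_k$ nor one of its descendants; this is exactly what prevents the meeting point from slipping below $n^0_k$, and it remains valid even if $i$ happens to be an ancestor of $n^0_k$, in which case $m(i,n^0_k)=i$ and both sides collapse to $\cE_i$. The only bookkeeping to keep clean throughout is the boundary cases $i=n^0_h$, $j=n^0_k$, or $m'$ coinciding with one of the roots, all of which are consistent with the stated formula.
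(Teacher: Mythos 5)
Your proof is correct and follows essentially the same route as the paper's: reduce $R_{ij}, X_{ij}$ to the common-path edge set $\cE_i\cap\cE_j$ and show this set is the same for all $i\in\cN_h$, $j\in\cN_k$. The paper simply asserts the invariance of the common path as an observation, whereas you rigorously justify it via the least-common-ancestor argument (using the ``all descendants'' clause of the subtree definition and disjointness of $\cN_h$ and $\cN_k$, including the boundary cases), which is a welcome sharpening of the same idea rather than a different approach.
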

\begin{proof}
We have the two following facts:
	\begin{enumerate}
		\item By \eqref{X_def}, $R_{ij}$ (resp. $X_{ij}$) is the summed resistance (resp. reactance) of the common path of node $i$ and $j$ leading back to node 0.
		\item Any node in one subtree and any node in another subtree (or any node in one subtree and one unclustered node) share the same common path back to node 0.
	\end{enumerate}
The result is immediate from 1) and 2).
\end{proof}

\begin{remark}
	The results of Lemma~\ref{lem:commonpath} can be illustrated in Fig.~\ref{fig:RijXij}. In Fig.~\ref{fig:RijXij}, any node $i$ in $\cT_h$ and any node $j$ in $\cT_k$ share the same $R_{ij}=R_{n^0_h n^0_k}$ and $X_{ij}=X_{n^0_h n^0_k}$, which by definition~\eqref{X_def} are the summed resistance and reactance of their marked common path connected to node 0.
\end{remark}

Lemma~\ref{lem:commonpath} enables us to recalculate the coupling terms in a hierarchical distributed way.

\noindent\textbf{For node $i\in\cN_k$:}  We decompose $\sum_{j\in\cN}R_{ij}(\overline{\mu}_j-\underline{\mu}_j)$ as:
\begin{eqnarray}
&&\sum_{j\in\cN}R_{ij}(\overline{\mu}_j-\underline{\mu}_j)\nonumber\\
&=& \sum_{j\in\cN_k}R_{ij}(\overline{\mu}_j-\underline{\mu}_j)+\hspace{-2mm}\sum_{j\in\cN\backslash\cN_k}\hspace{-2mm}R_{ij}(\overline{\mu}_j-\underline{\mu}_j)\nonumber\\
&=& \sum_{j\in\cN_k}R_{ij}(\overline{\mu}_j-\underline{\mu}_j)+\hspace{-2mm}\sum_{h\in\cK,h\neq k}\hspace{-3mm}R_{n^0_h n^0_k}\sum_{j\in\cN_h}(\overline{\mu}_j-\underline{\mu}_j)\nonumber\\
&&\hspace{4mm}+\sum_{j\in\cN_0}R_{n_k^0j}(\overline{\mu}_j-\underline{\mu}_j),\label{eq:decompose}
\end{eqnarray}
where the first part of \eqref{eq:decompose} consists of information within AG $k$ (together with the line parameter from $n_k^0$ to bus 0, i.e., $R_{n_k^0n_k^0}$ and $X_{n_k^0n_k^0}$, which can be informed by CC), the second from all the other AGs, and the third from the unclustered nodes. For convenience, denote:
\begin{eqnarray}
\hspace{-5mm}&&\alpha_{k,i}^{\text{in}}=\sum_{j\in\cN_k}R_{ij}(\overline{\mu}_j-\underline{\mu}_j),\nonumber\\
\hspace{-5mm}&&\alpha_k^{\text{out}}=\hspace{-2mm}\sum_{h\in\cK,h\neq k}\hspace{-2mm}R_{n^0_h n^0_k}\sum_{j\in\cN_h}(\overline{\mu}_j-\underline{\mu}_j)+\sum_{j\in\cN_0}R_{n_k^0j}(\overline{\mu}_j-\underline{\mu}_j).\nonumber
\end{eqnarray} 
Note that for $i\in\cN_k$, $\alpha_{k,i}^{\text{in}}$ is accessible by RC $k$ cognizant of the topology of AG $k$, and $\alpha_k^{\text{out}}$ does not involve the network structure of any other AGs but only that of the reduced network known by CC. 

\noindent \textbf{For unclustered node $i\in\cN_0$:} Similarly, one has:
\begin{eqnarray}
&\hspace{-10mm}&\sum_{j\in\cN}R_{ij}(\overline{\mu}_j-\underline{\mu}_j)\nonumber\\
&\hspace{-10mm}=& \sum_{k\in\cK}R_{i n^0_k}\sum_{j\in\cN_k}(\overline{\mu}_j-\underline{\mu}_j)+\sum_{j\in\cN_0}\hspace{-1mm}R_{ij}(\overline{\mu}_j-\underline{\mu}_j),\label{eq:decompose2}
\end{eqnarray}
whose computation requires only the topology and line parameters of the reduced network known by CC. 

Meanwhile, computational loads  are also largely reduced because of the following reasons: (1) the term $\sum_{h\in\cK,h\neq k}R_{n^0_h n^0_k}\sum_{j\in\cN_h}(\overline{\mu}_j-\underline{\mu}_j)$ requires less computation than the original $\sum_{j\in\cN_h,h\neq k}R_{ij}(\overline{\mu}_j-\underline{\mu}_j)$; and, more importantly, (2) $\alpha_k^{\text{out}}$ is the same for all the nodes in $\cN_k$, which reduces a lot of the repetitive computation that is executed by \eqref{eq:primaldual}.
Equations \eqref{eq:decompose}--\eqref{eq:decompose2} motivate us to design the hierarchical distributed algorithm, Algorithm~\ref{alg:distalg}. As will be shown in Section~\ref{sec:numerical}, {the improvement in convergence speed is more than ten times for a 4,521-node feeder, enabling fast real-time grid optimization}.

In Algorithm~\ref{alg:distalg}, both the CC and RCs are in charge of only a portion of the whole system: the CC manages the reduced network and coordinates RCs and unclustered nodes without knowing any detailed information within any AGs, and the RCs each manage their own AGs without knowing detailed information of other AGs or the reduced network. Also see Fig.~\ref{fig:testfeeder} for an illustration of clustering and information flows in Algorithm~\ref{alg:distalg}.

 
In addition, because Algorithm~~\ref{alg:distalg} is mathematically equivalent to \eqref{eq:primaldual}, the results in Theorems~\ref{the:unique}--\ref{the:converge} also apply to  Algorithm~\ref{alg:distalg}.

 \begin{figure}
     \centering
     \includegraphics[scale=0.3]{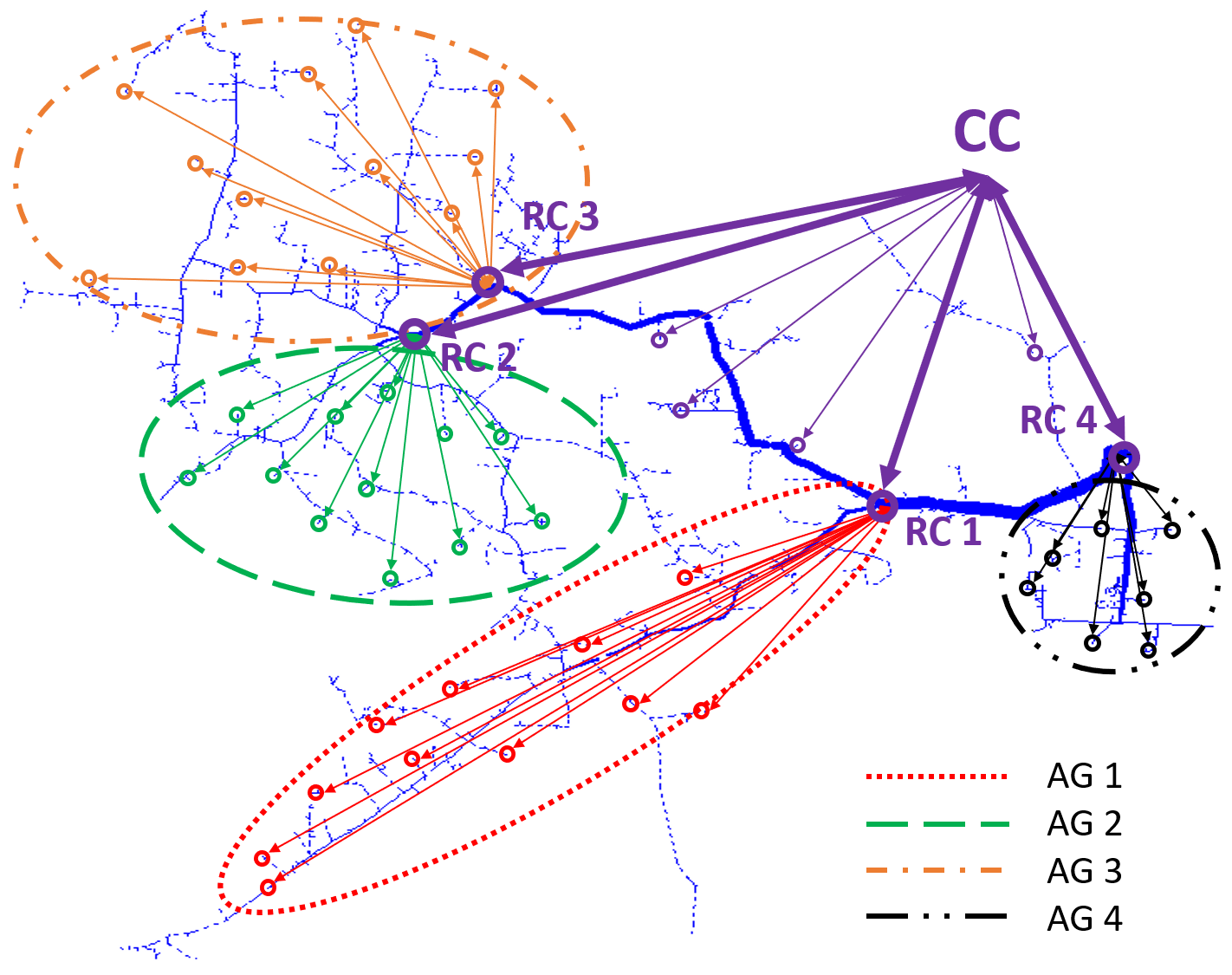}
     \caption{The 11,000-node test feeder constructed from an IEEE 8,500-node test feeder and a modified EPRI Test Circuits Ckt7. Four AGs are formed for our experiments.}\label{fig:testfeeder}
 \end{figure}
 
\section{Numerical Results}\label{sec:numerical}
In this section, we provide numerical results to show the effectiveness and the efficiency of our design. 
\subsection{Network Setup}
A three-phase, unbalanced, 11,000-node test feeder is constructed by connecting an IEEE 8,500-node test feeder and a modified EPRI Ckt7.  Fig.~\ref{fig:testfeeder} shows the single-line diagram of the feeder, where the line width is proportional to the nominal power flow on it. The primary side of the feeder is modeled in detail, whereas the loads on the secondary side (which in this system is aggregation of several loads) are lumped into corresponding distribution transformers, resulting in a 4,521-node network with 1,335 aggregated loads. 
We group all the nodes into unclustered nodes and four AGs marked in Fig.~\ref{fig:testfeeder}. AG 1 contains 357 nodes with controllable loads, AG 2 contains 222, AG 3 contains 310, and AG 4 contains 154. We fix the loads on all 292 unclustered nodes for simplicity.

The three-phase nonlinear power flow model is simulated in OpenDSS. With default control of the capacitors and regulators in OpenDSS \cite{dugan2010ieee}, we achieve the voltage profile shown in Fig.~\ref{fig:Voltage_controlled} with orange dots, where undervoltages are observed.
We next disable the control of all the capacitors and regulators to obtain the heavily undervoltage scenario shown with blue dots in Fig.~\ref{fig:Voltage_controlled}, and we implement our voltage regulation algorithm.



\subsection{Hierarchical Distributed Voltage Regulation}
For each controllable node $i$, we consider minimizing the cost of its deviation from its original load level $p_i^{o}, q_i^o$, i.e., $C_i(p_i,q_i)=(p_i-p_i^{o})^2+(q_i-q_i^{o})^2$. Because we focus on voltage regulation here, we set $C_0(P_0)=\alpha (P_0^2-\hat{P}_0)^2$ with a small $\alpha=0.0005$ and $\hat{P}_0$ set to 80\% of the initial $P_0$. $\underline{v}_i$ and $\overline{v}_i$ are uniformly set to $0.95$~p.u. and $1.05$~p.u., respectively. We implement Algorithm~\ref{alg:distalg} with a constant stepsize $\epsilon=1\times 10^{-3}$.

 \begin{figure}
     \centering
     \includegraphics[scale=0.5]{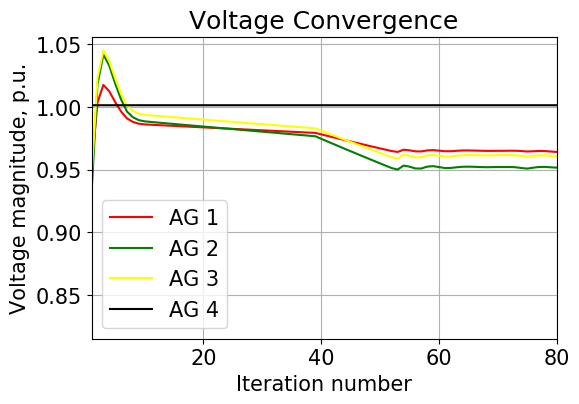}
     \caption{Voltage convergence at selected nodes from four AGs.}\label{fig:Voltage_converge}
 \end{figure}
 
\subsubsection{Convergence}
As shown in Fig.~\ref{fig:Voltage_converge}, convergence of the algorithm takes 50--60 iterations. In addition, thanks to the hierarchical design that largely reduces the computational burden, it takes about 25 seconds for the control of more than 1,000 nodes to converge on a laptop with an Intel Core i7-7600U CPU @ 2.80GHz 2.90GHz, 8.00GB RAM, running Python 3.6 on Windows 10 Enterprise Version. On the other hand, it takes the centrally coordinated primal-dual algorithm~\eqref{eq:primaldual}  (which is implemented by a centralized coordinator for all the nodes) almost four times longer to converge. By further considering the parallel computation of four clusters, the overall improvement in speed is more than  tenfold. 

 \begin{figure}
	\centering
	\includegraphics[scale=0.5]{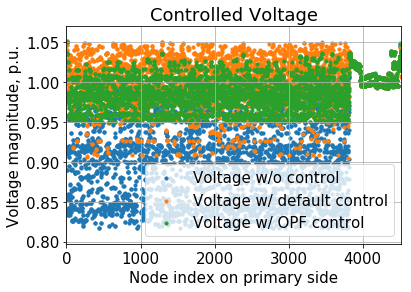}
	\caption{Voltages are controlled within $[0.95,1.05]$ p.u. by the proposed OPF algorithm.} \label{fig:Voltage_controlled}
\end{figure}

\subsubsection{Voltage Regulation} We plot the regulated voltages obtained by Algorithm~\ref{alg:distalg} with green dots in Fig.~\ref{fig:Voltage_controlled}. Note that the voltage magnitudes of all the nodes are strictly constrained within the $[0.95,1.05]$ p.u. bound. In contrast, the default control of regulators and capacitors cannot guarantee that all the voltages are within this bound.





\section{Conclusion}\label{sec:conclusion}
We proposed a hierarchical distributed implementation of the primal-dual gradient algorithm to solve an OPF problem. The objective of the OPF is to minimize the total cost over all the controllable DERs and a cost associated with the total network load, subject to voltage regulation constraints. The proposed implementation is scalable to large distribution feeders comprising networked AGs.  It largely reduces the computational burden compared to the centrally coordinated primal-dual algorithm by utilizing the information structure of the AGs. The performance of our design is analytically characterized and numerically corroborated. The significant improvement in convergence speed shows the great potential of the proposed method for grid optimization and control in real time.

\section*{Acknowledgments}
This work was authored in part by the National Renewable Energy Laboratory, operated by Alliance for Sustainable Energy, LLC, for the U.S. Department of Energy (DOE) under Contract No. DE-EE-0007998. Funding provided by U.S. Department of Energy. The views expressed in the article do not necessarily represent the views of the DOE or the U.S. Government. The U.S. Government retains and the publisher, by accepting the article for publication, acknowledges that the U.S. Government retains a nonexclusive, paid-up, irrevocable, worldwide license to publish or reproduce the published form of this work, or allow others to do so, for U.S. Government purposes.

\bibliographystyle{IEEEtran}
\bibliography{biblio.bib}
\appendix
\subsection{Proof of Lemma~\ref{lem:mono}}
\begin{proof}
Let $f(\bm{y})=\sum_{i\in\cN} C_i(p_i,q_i)+C_0(P_0(\bm{p},\bm{q}))$, and $\bm{\mu}^{\top}\bm{g}(\bm{y})=\underline{\bm{\mu}}^{\top}(\underline{\bm{v}}-\bm{v}(\bm{p},\bm{q}))+\overline{\bm{\mu}}^{\top}(\bm{v}(\bm{p},\bm{q})-\overline{\bm{v}})$ for simplicity. 
Then $T(\bm{z})$ can be equivalently decomposed into the following operators:
\begin{eqnarray}
T(\bm{z})&\hspace{-3mm}=&\hspace{-3mm}\begin{bmatrix}\nabla_{\bm{y}} f(\bm{y}) \\ \nabla_{\bm{\mu}}\frac{\phi}{2}\|\bm{\mu}\|_2^2\end{bmatrix}+\begin{bmatrix}\nabla_{\bm{y}} \bm{\mu}^{\top}\bm{g}(\bm{y}) \\ -\nabla_{\bm{\mu}}\bm{\mu}^{\top}\bm{g}(\bm{y})\end{bmatrix}\nonumber\\
&\hspace{-3mm}=&\hspace{-3mm}\begin{bmatrix}\nabla_{\bm{y}} f(\bm{y}) \\ \nabla_{\bm{\mu}}\frac{\phi}{2}\|\bm{\mu}\|_2^2\end{bmatrix}+\begin{bmatrix} 0 & 0 & -R^{\top} & R^{\top}\\ 0 & 0 & -X^{\top} & X^{\top}\\ R & X & 0 & 0 \\ -R & -X & 0 & 0 \end{bmatrix}\begin{bmatrix}\bm{p}\\\bm{q}\\\underline{\bm{\mu}}\\\overline{\bm{\mu}}\end{bmatrix}\nonumber\\
&&+\ \text{Constant}.
\end{eqnarray}

One can verify that the first operator $\begin{bmatrix}\nabla_{\bm{y}} f(\bm{y}) \\ \nabla_{\bm{\mu}}\frac{\phi}{2}\|\bm{\mu}\|_2^2\end{bmatrix}$ is strongly monotone since $f(\bm{y})$ and $\frac{\phi}{2}\|\bm{\mu}\|_2^2$ are strongly convex in $\bm{y}$ and $\bm{\mu}$, respectively. The second (linear) operator is monotone since
\begin{eqnarray}
\begin{bmatrix*}[c] 0  &\hspace{-2mm} 0 &\hspace{-2mm} -R^{\top} &\hspace{-2mm} R^{\top}\\ 0 &\hspace{-2mm} 0 & \hspace{-2mm} -X^{\top} &\hspace{-2mm} X^{\top}\\ R &\hspace{-2mm} X &\hspace{-2mm} 0 &\hspace{-2mm} 0 \\ -R &\hspace{-2mm} -X & \hspace{-2mm}0 &\hspace{-2mm} 0 \end{bmatrix*}+\begin{bmatrix*}[c] 0 & \hspace{-2mm}0 & \hspace{-2mm}-R^{\top} &\hspace{-2mm} R^{\top}\\ 0 &\hspace{-2mm} 0 & \hspace{-2mm}-X^{\top} & \hspace{-2mm}X^{\top}\\ R & \hspace{-2mm}X & \hspace{-2mm}0 & \hspace{-2mm}0 \\ -R & \hspace{-2mm}-X &\hspace{-2mm} 0 &\hspace{-2mm} 0 \end{bmatrix*}^{\top}\hspace{-1mm} \succeq 0. \nonumber
\end{eqnarray}

Therefore, $T(\bm{z})$ is a strongly monotone operator as the result of combining a strongly monotone operator and a monotone operator.
\end{proof}

\subsection{Proof of Theorem~\ref{the:converge}}
\begin{proof}
Let $\Delta<1$ be some positive constant. We have
\begin{eqnarray}
\hspace{-3mm}&&\|\bm{z}(t+1)-\bm{z}^*\|_2^2\nonumber\\
\hspace{-3mm}&\leq& \|\bm{z}(t)-\epsilon T(\bm{z}(t))-\bm{z}^*+\epsilon T(\bm{z}^*)\|_2^2\nonumber\\
\hspace{-3mm}&=& \|\bm{z}(t)-\bm{z}^*\|_2^2+\|\epsilon T(\bm{z}(t))-\epsilon T(\bm{z}^*)\|_2^2\nonumber\\
\hspace{-3mm}&&\hspace{3mm}-2\epsilon (\bm{z}(t)-\bm{z}^*)^{\top}(T(\bm{z}(t))- T(\bm{z}^*))\nonumber\\
\hspace{-3mm}&\leq&(1+\epsilon^2L^2-2\epsilon M)\|\bm{z}(t)-\bm{z}^*\|_2^2\nonumber
\end{eqnarray}
where the first inequality comes from non-expansiveness of projection operator, and the second from \eqref{eq:strongmono} and \eqref{eq:lipschitz}. Notice that  \eqref{eq:strongmono} and \eqref{eq:lipschitz} together also guarantee that $M\leq L$ so that $1+\epsilon^2L^2-2\epsilon M$ is always nonnegative. Then based on the condition for Theorem~\ref{the:converge} that $0<\epsilon\leq \overline{\epsilon}<2M/L^2$ one has $\|\bm{z}(t+1)-\bm{z}^*\|_2^2\leq \Delta \|\bm{z}(t)-\bm{z}^*\|_2^2$ for some constant $0<\Delta <1$.

Therefore, \eqref{eq:mapk} converges to the unique saddle point of \eqref{eq:langr} exponentially fast.
\end{proof}

\end{document}